\newtheorem{prop}{Proposition}
\begin{document}

\title{\Large It's good to be $\phi$: a solution to a problem of Gosper and Knuth\footnote{Franklin H. J. Kenter; kenter@usna.edu; 572C Holloway Road, Annapolis, MD 21401; \today}}
\date{}
\maketitle
We present the solution to a problem presented by Knuth, attributed to Gosper \cite{q3}. 

\begin{prop}
Let $\phi$ denote the golden ratio. Then,
\[ \phi = \frac{2^{2/5} \sqrt{5}~\Gamma(\frac1 5)^4}{\Gamma(\frac{1}{10})^2 \Gamma(\frac{3}{10})^2}. \]
\end{prop}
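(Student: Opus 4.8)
The plan is to reduce everything to a single known trigonometric value via the two classical functional equations for the Gamma function: the Legendre duplication formula $\Gamma(z)\Gamma(z+\tfrac12) = 2^{1-2z}\sqrt{\pi}\,\Gamma(2z)$ and the reflection formula $\Gamma(z)\Gamma(1-z) = \pi/\sin(\pi z)$. The appearance of the half-integer-shifted arguments $\tfrac{1}{10}$ and $\tfrac{3}{10}$ alongside $\tfrac15$ is the signal that duplication is the right tool, since $\tfrac{1}{10}+\tfrac12 = \tfrac{3}{5}$ and $\tfrac{3}{10}+\tfrac12 = \tfrac{4}{5}$ double back down to fifths.

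First I would apply duplication at $z = \tfrac{1}{10}$ and $z=\tfrac{3}{10}$ to obtain
\[ \Gamma(\tfrac{1}{10}) = \frac{2^{4/5}\sqrt{\pi}\,\Gamma(\tfrac15)}{\Gamma(\tfrac35)}, \qquad \Gamma(\tfrac{3}{10}) = \frac{2^{2/5}\sqrt{\pi}\,\Gamma(\tfrac35)}{\Gamma(\tfrac45)}. \]
Squaring and multiplying, the $\Gamma(\tfrac35)$ factors cancel and one finds $\Gamma(\tfrac{1}{10})^2\Gamma(\tfrac{3}{10})^2 = 2^{12/5}\pi^2\,\Gamma(\tfrac15)^2/\Gamma(\tfrac45)^2$. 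Substituting this into the denominator of the claimed identity collapses the right-hand side to $\tfrac{\sqrt5}{4\pi^2}\,\Gamma(\tfrac15)^2\Gamma(\tfrac45)^2$, where conveniently the awkward power of $2$ in the numerator also cancels.

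Next I would invoke reflection at $z=\tfrac15$, namely $\Gamma(\tfrac15)\Gamma(\tfrac45) = \pi/\sin(\tfrac{\pi}{5})$, so that $\Gamma(\tfrac15)^2\Gamma(\tfrac45)^2 = \pi^2/\sin^2(\tfrac{\pi}{5})$ and the whole expression becomes simply $\tfrac{\sqrt5}{4\sin^2(\pi/5)}$; all Gamma values have now disappeared. The final step is the genuinely number-theoretic one: I would substitute the exact value $\sin^2(\tfrac{\pi}{5}) = \tfrac{5-\sqrt5}{8}$ and rationalize, which gives $\tfrac{2\sqrt5}{5-\sqrt5} = \tfrac{2}{\sqrt5-1} = \tfrac{1+\sqrt5}{2} = \phi$, closing the argument.

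The computations are all elementary once the strategy is fixed, so there is no serious analytic obstacle; the only place demanding care is bookkeeping the powers of $2$ and of $\sqrt{\pi}$ through the squaring and substitution so that both cancel exactly, together with recalling (or deriving from $\cos\tfrac{\pi}{5}=\phi/2$) the closed form for $\sin^2(\pi/5)$. Everything ultimately hinges on the matching of the fifth-root arguments under duplication, which is precisely what makes this particular identity true.
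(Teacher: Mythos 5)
Your proof is correct, and it takes a genuinely different route from the paper's. The paper quotes two specific closed-form identities for $\Gamma(\tfrac{1}{10})$ and $\Gamma(\tfrac{3}{10})$ in terms of $\Gamma(\tfrac15)$ and $\Gamma(\tfrac25)$ (taken from MathWorld and attributed to Borwein--Zucker-type techniques) and then cancels; the argument is short but rests entirely on those imported, non-elementary identities. You instead derive everything from the two classical functional equations: duplication at $z=\tfrac{1}{10}$ and $z=\tfrac{3}{10}$ eliminates the tenths (with the intermediate $\Gamma(\tfrac35)$ cancelling), reflection at $z=\tfrac15$ eliminates the remaining Gamma values, and the identity reduces to $\tfrac{\sqrt5}{4\sin^2(\pi/5)}$ with $\sin^2(\tfrac{\pi}{5})=\tfrac{5-\sqrt5}{8}$. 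I have checked your exponents: $\Gamma(\tfrac{1}{10})^2\Gamma(\tfrac{3}{10})^2 = 2^{12/5}\pi^2\Gamma(\tfrac15)^2/\Gamma(\tfrac45)^2$ is right, the ratio $2^{2/5}/2^{12/5}=\tfrac14$ is right, and the final rationalization gives $\tfrac{2}{\sqrt5-1}=\phi$, which is exactly where the paper's computation also lands. What your approach buys is self-containedness: the only non-textbook input is the value of $\cos(\tfrac{\pi}{5})$, whereas the paper's proof is only as trustworthy as the two quoted formulas (which, incidentally, can themselves be recovered from your duplication-plus-reflection manipulations, so your argument in effect subsumes the paper's lemmas rather than assuming them). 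The trade-off is that your version requires slightly more bookkeeping of powers of $2$ and $\sqrt{\pi}$, but you have done that bookkeeping correctly.
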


\begin{proof}
The proof relies on two equalities below. These equalities are listed on the {\it MathWorld} entry for the ``Gamma Function'' \cite{q1}; they are attributed to techniques within \cite{q2} and generated by M. Trott \cite{q1}.

\begin{eqnarray}
\Gamma \left(\frac{1}{10}\right) &=& \frac{\sqrt[4]{5}~ \sqrt{1+\sqrt{5}~} ~\Gamma\left(\frac{1}{5}\right) \Gamma \left(\frac{2}{5}\right)}{2^{7/10} \sqrt{\pi }} \label{G1} \\
 \Gamma\left(\frac{3}{10}\right) &=& \frac{\left(\sqrt{5}-1\right) \sqrt{\pi } \Gamma \left(\frac{1}{5}\right)}{2^{3/5} \Gamma \left(\frac{2}{5}\right)} \label{G3} 
\end{eqnarray}
By applying lines \ref{G1} and \ref{G3}, we have:
\begin{eqnarray}
\frac{2^{2/5} \sqrt{5}~ \Gamma \left(\frac{1}{5}\right)^4}{\Gamma(\frac{1}{10})^2 \Gamma(\frac{3}{10})^2} 
&=& \displaystyle \frac{2^{2/5} \sqrt{5}~ \Gamma \left(\frac{1}{5}\right)^4}{\left[\frac{\sqrt[4]{5} \sqrt{1+\sqrt{5}~} \Gamma \left(\frac{1}{5}\right) \Gamma \left(\frac{2}{5}\right)}{2^{7/10} \sqrt{\pi }}\right]^2 \left[\frac{\left(\sqrt{5}-1\right) \sqrt{\pi } \Gamma \left(\frac{1}{5}\right)}{2^{3/5} \Gamma  \left(\frac{2}{5}\right)}\right]^2} \\
&=& \displaystyle \frac{2^{2/5} \sqrt{5}~ \Gamma \left(\frac{1}{5}\right)^4}{\frac{\sqrt{5}~ (1+\sqrt{5}~) \Gamma \left(\frac{1}{5}\right)^2 \Gamma \left(\frac{2}{5}\right)^2 \cdot \left(\sqrt{5}-1\right)^2 \pi \Gamma \left(\frac{1}{5}\right)^2}{2^{7/5} \pi \cdot 2^{6/5} \Gamma  \left(\frac{2}{5}\right)^2 }} \\
&=& \displaystyle \frac{8 }{ (1+\sqrt{5})   \cdot \left(\sqrt{5}-1\right)^2} \\
&=& \displaystyle \frac{2 }{ \sqrt{5}-1 } ~~=~~ \displaystyle \phi 
\end{eqnarray}
\end{proof}

%

\end{document}